\newcommand{\excise}[1]{}
\newtheorem{thm}{Theorem}[section]
\newtheorem{lemma}[thm]{Lemma}
\newtheorem{cor}[thm]{Corollary}
\newtheorem{prob}[thm]{Problem}
\newtheorem*{mainprob}{Main Problem}
\theoremstyle{definition}
\newtheorem{remark}[thm]{Remark}
\newtheorem{defn}[thm]{Definition}
\numberwithin{equation}{section}
\newcommand\NN{\mathbb{N}}
\newcommand\ZZ{\mathbb{Z}}
\newcommand\abs[1]{|#1|}
\newcommand\nsg{\mathcal{S}}
\newcommand\nsgtwo{\mathcal{T}}
\def\vec#1{\mathchoice{\mbox{\boldmath$\displaystyle\bf#1$}}
{\mbox{\boldmath$\textstyle\bf#1$}}
{\mbox{\boldmath$\scriptstyle\bf#1$}}
{\mbox{\boldmath$\scriptscriptstyle\bf#1$}}}
\begin{document}

\mbox{}
\title[When is a numerical semigroup a quotient?]{When is a numerical semigroup a quotient?}

\author[Bogart]{Tristram Bogart}
\address{Departamento de Matem\'aticas \\ Universidad de los Andes \\ Bogot\'a, Colombia}
\email{tc.bogart22@uniandes.edu.co}

\author[O'Neill]{Christopher O'Neill}
\address{Mathematics Department\\San Diego State University\\San Diego, CA 92182}
\email{cdoneill@sdsu.edu}

\author[Woods]{Kevin Woods}
\address{Department of Mathematics\\Oberlin College\\Oberlin, OH 44074}
\email{kwoods@oberlin.edu}

\date{\today}

\begin{abstract}
A natural operation on numerical semigroups is taking a quotient by a positive integer. If $\nsg$ is a quotient of a numerical semigroup with $k$ generators, we call $\nsg$ a $k$-quotient.  We give a necessary condition for a given numerical semigroup $\nsg$ to be a $k$-quotient, and present, for each $k \ge 3$, the first known family of numerical semigroups that cannot be written as a $k$-quotient.  We also examine the probability that a randomly selected numerical semigroup with $k$ generators is a $k$-quotient.  
\end{abstract}

\maketitle


\section{Introduction}
\label{sec:intro}

We denote $\NN=\{0,1,2,\dots\}$, and we define a \emph{numerical semigroup} to be a set $\nsg\subseteq\NN$ that is closed under addition and contains~0. A numerical semigroup can be defined by a set of generators,
\[\langle a_1,\ldots,a_n\rangle = \{a_1x_1+\cdots a_nx_n:\ x_i\in\NN\},\]
and if $a_1,\ldots,a_n$ are the minimal set of generators of $\nsg$, we say that $\nsg$ has \emph{embedding dimension} $\mathsf e(\nsg) = n$.  For example,
\[\langle 3,5\rangle = \{0,3,5,6,8,9,10,\ldots\}\]
has embedding dimension 2.  

If $\nsg$ is a numerical semigroup, then an interesting way to create a new numerical semigroup 
is by taking the \emph{quotient}
\[
\frac{\nsg}{d} = \{ t \in \NN:\  dt \in \nsg\}
\]
by some positive integer $d$.  Note that $\frac{1}{d}\nsg$ is itself a numerical semigroup, one that in particular satisfies $\nsg \subseteq \frac{1}{d}\nsg \subseteq \NN$.  For example, 
\[
\frac{\langle 3,5\rangle}{2}=\{0,3,4,5,\ldots\}=\langle 3,4,5\rangle.
\]
Quotients of numerical semigroups appear through the literature over the past couple of decades~\cite{symmetriconeelement,symmetricquotient} as well as recently~\cite{harrisquotient,nsquotientgens}; see~\cite[Chapter~5]{numerical} for a thorough overview.

\begin{defn}\label{def:quotientrank}
We say a numerical semigroup $\nsg$ is a \emph{$k$-quotient} if $\nsg=\langle a_1,\ldots,a_k\rangle/d$ for some positive integers $d, a_1, \ldots, a_k$.  The \emph{quotient rank} of $\nsg$ is the smallest $k$ such that $\nsg$ is a $k$-quotient, and we say $\nsg$ has \emph{full quotient rank} if its quotient rank is $\mathsf e(\nsg)$ (since $\nsg=\frac{\nsg}{1}$, its quotient rank is at most $\mathsf e(S)$).
\end{defn}


Numerical semigroups of quotient rank 2 are precisely the \emph{proportionally modular} numerical semigroups~\cite{openmodularns}, which have been well-studied~\cite{propmodtree,propmodular}.  
This includes arithmetical numerical semigroups (whose generators have the form $a, a + d, \ldots, a + kd$ with $\gcd(a,d) = 1$), which have a rich history in the numerical semigroup literature~\cite{diophantinefrob,setoflengthsets,nsfreeresarith}.  In fact, generalized arithmetical numerical semigroups~\cite{omidalirahmati}, whose generating sets have the form $a, ah + d, \ldots, ah + kd$, can also be shown to have quotient rank 3.  

For quotient rank $k \ge 3$, much less is known.  It is identified as an open problem in~\cite{nsgproblems} that no numerical semigroup had been proven to have quotient rank at least 4.  
Since then, the only progress in this direction is~\cite{ksquashed}, wherein it is shown there exist infinitely many numerical semigroups with quotient rank at least 4, though no explicit examples are given.  

With this in mind, we state the main question of the present paper.  

\begin{mainprob}\label{mainprob:whenaquotient}
When is a given numerical semigroup $\nsg$ a $k$-quotient?  
\end{mainprob}

Our main structural results, which are stated in Section~\ref{sec:necessary}, are as follows.  
\begin{itemize}
\item 
We prove a sufficient condition for full quotient rank (Theorem~\ref{thm:necessary}), which we use to obtain, for each $k$, a numerical semigroup of embedding dimension $k + 1$ that is not a $k$-quotient (Theorem~\ref{thm:noquotient}).  When $k \ge 3$, this is the first known example of a numerical semigroup that is not a $k$-quotient.  We~also construct, for each $k$, a numerical semigroup that cannot be written as an intersection of $k$-quotients (Theorem~\ref{thm:nointersection}), settling a conjecture posed in~\cite{ksquashed}.  

\item 
We prove quotient rank is sub-additive whenever the denominators are coprime.  
This provides a new method of proving a given numerical semigroup is a quotient:\ partition its generating set, and prove that each subset generates a quotient, e.g.,
\begin{align*}
\langle 11,12,13,17,18,19,20 \rangle
&= \langle 11,12,13 \rangle + \langle 17,18,19,20 \rangle
= \frac{\langle 11,13\rangle}{2}+\frac{\langle 17,20\rangle}{3} 
\\
&= \frac{3\langle 11,13\rangle + 2\langle 17,20\rangle}{2 \cdot 3}
= \frac{\langle 33,34,39,40\rangle}{6}.
\end{align*}
We~use this result to prove that any numerical semgiroup with \emph{maximal embedding dimension} (that is, the smallest generator equals the embedding dimension) fails to have full quotient rank (Theorem~\ref{thm:maxembdim}).  
\end{itemize}

Our remaining results are probabilistic in nature.  We examine two well-studied models for ``randomly selecting'' a numerical semigroup:\ the ``box'' model, where the number of generators and a bound on the generators are fixed~\cite{expectedfrob,arnoldfrob,burgeinsinaifrob}; as well as a model where the smallest generator and the number of gaps are fixed~\cite{kaplancounting}, whose prior study has yielded connections to enumerative combinatorics~\cite{kunzcoords} and polyhedral geometry~\cite{kunzfaces1,kunz}.  
We prove that under the first model, asymptotically all semigroups have full quotient rank (Theorem~\ref{thm:numericalbox}), while under the second model, asymptotically no semigroups have full quotient rank (Theorem~\ref{thm:maxembdim}).

Our results also represent partial progress on the following question, which has proved difficult.

\begin{prob}\label{prob:algorithm}
Given a numerical semigroup $\nsg$ and a positive number $k$, is there an algorithm to determine whether $\nsg$ is a $k$-quotient?  
\end{prob}

\begin{remark}\label{rem:relprime}
Some texts require that the generators of a numerical semigroup be relatively prime, so that $\NN\setminus \nsg$ is finite.  This assumption is harmless, since any numerical semigroup can be written as $m\nsg$, where the generators of $\nsg$ are relatively prime, and it also doesn't affect $k$-quotientability: given a positive integer $d$, one can readily check that
\[
\frac{m\nsg}{d} = m' \left(\frac{\nsg}{d'}\right),
\]
where $m' = m/\gcd(m,d)$ and $d' = d/\gcd(m,d)$.  
\end{remark}




\section{When is $\nsg$ not a $k$-quotient?}
\label{sec:necessary}

In this section, we give two structural results.  
The first (Theorem~\ref{thm:necessary}) is a necessary condition for a given numerical semigroup $\nsg$ to be a $k$-quotient, which forms the backbone of the constructions in Section~\ref{sec:fullquotientrank} and the probabilistic results in Section~\ref{sec:randomsgps}.  The second (Theorem~\ref{thm:sums}) is a constructive proof that quotient rank is sub-additive, provided the denominators are relatively prime.  

In what follows, we write $[p] = \{1,2,\dots,p\}$ for any positive integer $p$, and given a collection of vectors $\{\vec v_i\}$ and a set of indices $I$, we define $\vec v_I=\sum_{i\in I}\vec v_i$.  

\begin{thm} \label{thm:necessary}
Suppose
\[\nsg =\frac{\langle b_1,\ldots,b_k\rangle}{d}\]
for some $b_i\in \NN$ and positive integer $d$. Given any elements $s_1,\ldots,s_p \in \nsg$ with $p > k$, there exists a nonempty subset $I\subseteq [p]$ such that $s_I/2\in \nsg$.
\end{thm}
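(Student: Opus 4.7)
The plan is a linear-algebra argument over $\ZZ/2\ZZ$. For each $s_i \in \nsg$, I would fix a representation $ds_i = \sum_{j=1}^k c_{ij} b_j$ with $c_{ij} \in \NN$, and attach to $s_i$ the vector $\vec v_i = (c_{i1},\ldots,c_{ik},s_i) \pmod{2} \in (\ZZ/2\ZZ)^{k+1}$. The key observation is that reducing the identity $ds_i = \sum_j c_{ij} b_j$ modulo $2$ gives a single linear relation, namely $(d \bmod 2)\, x_{k+1} \equiv \sum_j (b_j \bmod 2)\, x_j$, satisfied by every $\vec v_i$.

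I would then want this relation to be nontrivial, so that all $\vec v_i$ lie in a subspace of dimension at most $k$. This holds as soon as some $b_j$ or $d$ is odd; if instead all of $b_1,\ldots,b_k,d$ are even, I would note that $\langle b_1,\ldots,b_k\rangle/d = \langle b_1/2,\ldots,b_k/2\rangle/(d/2)$ and that the chosen $c_{ij}$ remain valid representations, so the reduction can be iterated until the parities make the mod-$2$ relation nontrivial.

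With the $\vec v_i$ confined to a subspace of dimension at most $k$, the hypothesis $p > k$ forces linear dependence over $\ZZ/2\ZZ$, yielding a nonempty $I \subseteq [p]$ with $\sum_{i \in I} \vec v_i = \vec 0$. Reading off the $(k+1)$st coordinate gives $s_I$ even, so $s_I/2 \in \NN$; reading off the first $k$ coordinates gives $\sum_{i \in I} c_{ij} \in 2\NN$ for each $j$, hence
\[
d \cdot (s_I/2) \;=\; \sum_j \tfrac{1}{2}\Bigl(\sum_{i\in I} c_{ij}\Bigr) b_j \;\in\; \langle b_1,\ldots,b_k\rangle,
\]
placing $s_I/2$ in $\nsg$.

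The point I expect to require the most care is the parity bookkeeping. Using only the $c_{ij}$ coordinates (the most obvious version of the argument) shows $ds_I/2 \in \langle b_1,\ldots,b_k\rangle$ but not that $s_I$ is even, which is needed for $s_I/2$ to be an integer once $d$ is allowed to be even. Packaging $s_i \bmod 2$ as the extra $(k+1)$st coordinate and exploiting the fact that the mod-$2$ reduction of $ds_i = \sum_j c_{ij} b_j$ imposes an additional linear relation on these coordinates is exactly what lets the bound $p > k$ suffice, rather than $p > k+1$.
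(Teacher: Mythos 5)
Your proof is correct, and it is more careful than the paper's where care is genuinely needed. The two arguments share the same core — reduce exponent vectors mod $2$ and extract a nonempty $I$ on which the sum vanishes, whether phrased as pigeonhole on $\vec c_J \bmod 2$ and symmetric differences (the paper) or as $\ZZ/2\ZZ$-linear dependence (you) — but they differ at a crucial point. The paper records the representation as $s_i = d(c_{i1}b_1 + \cdots + c_{ik}b_k)$, ``which exist since $s_i \in \nsg$''; but $s_i \in \langle b_1,\ldots,b_k\rangle/d$ gives $ds_i = c_{i1}b_1 + \cdots + c_{ik}b_k$ (the $d$ on the other side, exactly as you have it), not $s_i \in d\langle b_1,\ldots,b_k\rangle$. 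With the relation corrected, finding $I$ with every $\sum_{i\in I}c_{ij}$ even yields $d(s_I/2) \in \langle b_1,\ldots,b_k\rangle$, but when $d$ is even it does not force $s_I$ itself to be even, so $s_I/2$ may not be an integer: e.g.\ $\langle 2,3\rangle/2 = \NN$ with $s_1=s_3=1$, $s_2=3$ and the representation $2\cdot 3 = 0\cdot 2 + 2\cdot 3$ makes $I=\{2\}$ eligible yet $s_I/2 = 3/2 \notin \NN$. Your extra coordinate $s_i \bmod 2$, together with the observation that reducing $ds_i = \sum_j c_{ij}b_j$ mod $2$ confines the augmented vectors to a $k$-dimensional subspace of $(\ZZ/2\ZZ)^{k+1}$ (after iteratively halving $d$ and the $b_j$ to make that relation nontrivial, which preserves both $\nsg$ and the chosen $c_{ij}$), closes exactly this gap while still letting $p > k$ force the needed dependence. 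So the approaches are the same in spirit, but your bookkeeping of the parity of $s_I$ is what actually completes the argument; the paper's version as printed only appears to avoid the issue because of the misplaced $d$.
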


\begin{proof}
Let $\vec b=(b_1,\ldots,b_k)$. For $1\le i\le p$, let $\vec c_i=(c_{i1},\ldots,c_{ik})\in\NN^k$ be such that
\[s_i = d (c_{i1}b_1+\cdots c_{ik}b_k),\]
which exist since $s_i\in\nsg$.
For a vector $\vec v\in\ZZ^k$, define $\vec v\bmod 2\in\ZZ_2^k$ to be the coordinate-wise reduction of $\vec v$ modulo 2. For $J\subseteq [p]$, examine $\vec c_J \bmod 2$. There are $2^p$ possible $J$ and $2^k$ possible values for $\vec c_J \bmod 2$, with $p>k$, so there must be two distinct $J_1$ and $J_2$ such that
\[\vec c_{J_1}\bmod 2=\vec c_{J_2}\bmod 2.\]
Let $I=(J_1\setminus J_2)\cup (J_2\setminus J_1)$ be their symmetric difference, which is nonempty. Then
\[\vec c_{I}\bmod 2=\vec c_{J_1}+\vec c_{J_2}-2\vec c_{J_1\cap J_2}\bmod 2=\vec 0,\]
so $\vec c_{I}$ has even coordinates. Let $\vec c_I = (2q_1,\ldots,2q_k)$ where $q_i\in\NN$. Then
\begin{align*}
s_I/2&=\sum_{i\in I}\big(d\cdot (c_{i1}b_1+\cdots c_{ik}b_k)\big)/2
= d\sum_{j=1}^kb_j\sum_{i\in I}c_{ij}/2
=d\sum_{j=1}^k q_jb_j
\end{align*}
is an element of $\nsg$, as desired.
\end{proof}

\begin{cor} \label{cor:necessary}
Let $\nsg = \langle a_1, \dots, a_n \rangle$ be a numerical semigroup.  If $\nsg$ does not have full quotient rank, then there exists $I \subseteq [n]$ such that
\[ a_I \in\langle a_j:\ j\notin I \rangle.\]
\end{cor}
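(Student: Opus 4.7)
The plan is to invoke Theorem~\ref{thm:necessary} to produce a subset of generators whose sum halves into $\nsg$, and then rearrange the resulting identity to isolate a subset of generators expressible using only those outside of it. Since $\nsg$ has embedding dimension $n$ but its quotient rank is at most $n-1$, we may write $\nsg = \langle b_1,\ldots,b_{n-1}\rangle/d$ for appropriate positive integers $b_i,d$. Applying Theorem~\ref{thm:necessary} with $s_i=a_i$, $p=n$, and $k=n-1$ would yield a nonempty $I\subseteq[n]$ with $a_I/2\in\nsg$. Choosing $c_1,\ldots,c_n\in\NN$ with $a_I/2=\sum_{j=1}^n c_ja_j$ and doubling would give the identity $\sum_{j\in I}a_j = \sum_{j=1}^n 2c_j a_j$.

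The main step is to partition $I$ according to whether $c_j$ vanishes. Setting $I'=\{j\in I : c_j=0\}$, I would rearrange the displayed identity to
\[
a_{I'} = \sum_{j\in I\setminus I'}(2c_j-1)a_j + \sum_{j\notin I} 2c_j a_j,
\]
and observe that all coefficients on the right are nonnegative integers, since $c_j\ge 1$ for $j\in I\setminus I'$. Provided $I'$ is nonempty, this exhibits $a_{I'}\in\langle a_j : j\notin I'\rangle$, giving the required subset of $[n]$.

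The one point to verify is nonemptiness of $I'$, which is where I expect whatever little friction the proof contains to appear. If $c_j\ge 1$ for every $j\in I$, then $a_I = \sum_{j=1}^n 2c_j a_j \ge 2\sum_{j\in I} a_j = 2a_I$, contradicting $a_I>0$. Hence $I'\ne\emptyset$ and serves as the desired subset, and no genuine obstacle arises beyond this bookkeeping.
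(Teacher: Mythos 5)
Your proof is correct and follows essentially the same route as the paper: apply Theorem~\ref{thm:necessary} to the generators, expand $a_I/2$ in terms of the $a_j$, and pass to the subset of $I$ on which the coefficients vanish, with the same nonemptiness check (your contradiction $a_I \ge 2a_I$ is just a repackaging of the paper's $0 = a_I \ge a_J > 0$). The only difference is notational, with your $I$ and $I'$ playing the roles of the paper's $J$ and $I$ respectively.
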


\begin{proof}
By applying Theorem~\ref{thm:necessary} to the generating set $\{a_1, \dots, a_n\}$, we obtain that for some $J \subseteq [n]$, $a_J/2 \in \nsg$. So there exist $c_r\in\NN$ such that 
\[
\sum_{j\in J} a_j=\sum_{r\in R} 2c_r a_r
\]
where $R=\{r:\ c_r>0\}$.  
Letting $I=J\setminus R$ and subtracting each $a_j$ with $j\in J\cap R$ from both sides, we have
\[
a_I=\sum_{i\in I} a_i = \sum_{r\in J\cap R} (2c_r-1) a_r + \sum_{r\in R\setminus J}2c_r a_r
\]
is an element of $\langle a_j:\ j\notin I\rangle$,
as desired. Note that $I$ is nonempty, as otherwise 
\[
0 = a_I = \sum_{r\in J\cap R} (2c_r-1) a_r + \sum_{r\in R\setminus J}2c_r a_r \ge \sum_{r\in J\cap R} a_r = \sum_{r\in J} a_r > 0
\]
since $J$ is nonempty, 
which is a contradiction.
\end{proof}


\begin{thm} \label{thm:sums}
If $\nsg$ and $\nsgtwo$ are numerical semigroups and $\gcd(c,d) = 1$, then
\[
\frac{\nsg}{c} + \frac{\nsgtwo}{d} = \frac{d\nsg + c \nsgtwo}{cd}.
\]
\end{thm}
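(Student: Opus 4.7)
The plan is to prove the two inclusions separately, noting that the hypothesis $\gcd(c,d) = 1$ will only be needed for the reverse inclusion.

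For the forward inclusion $\frac{\nsg}{c} + \frac{\nsgtwo}{d} \subseteq \frac{d\nsg + c\nsgtwo}{cd}$, I would take an arbitrary element $t = u + v$ with $u \in \frac{\nsg}{c}$ and $v \in \frac{\nsgtwo}{d}$, so that $cu \in \nsg$ and $dv \in \nsgtwo$. Then the identity $cd\,t = d(cu) + c(dv)$ places $cdt$ in $d\nsg + c\nsgtwo$, which gives the claim. This direction does not use coprimality.

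For the reverse inclusion, I would take $t$ with $cdt \in d\nsg + c\nsgtwo$, so $cdt = ds + ct'$ for some $s \in \nsg$ and $t' \in \nsgtwo$, and then try to split $t$ as a sum $u + v$ with $cu \in \nsg$ and $dv \in \nsgtwo$. The key observation is that reducing the equation $cdt = ds + ct'$ modulo $c$ gives $ds \equiv 0 \pmod{c}$; since $\gcd(c,d) = 1$, this forces $c \mid s$. Writing $s = cu$ for some $u \in \NN$ and cancelling a factor of $c$ from the original equation yields $dt = du + t'$, hence $t' = d(t - u)$. Setting $v = t - u$ gives $dv = t' \in \nsgtwo$, and since $t' \ge 0$ and $d > 0$ we have $v \ge 0$, so both $u$ and $v$ are nonnegative integers, $u \in \frac{\nsg}{c}$ because $cu = s \in \nsg$, and $v \in \frac{\nsgtwo}{d}$ because $dv = t' \in \nsgtwo$.

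The only real step requiring any thought is the divisibility argument using $\gcd(c,d) = 1$; everything else amounts to bookkeeping. One small point I would double-check is that the quotient $u = s/c$ really lies in $\NN$ (not just $\ZZ$), but this follows immediately from $s \ge 0$ and $c > 0$, and similarly for $v$.
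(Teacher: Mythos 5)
Your proof is correct and takes essentially the same approach as the paper: the trivial forward inclusion, then using $\gcd(c,d)=1$ to extract divisibility from the equation $cdx = ds + ct'$. The only cosmetic difference is that the paper applies the coprimality argument symmetrically to show both $c \mid s$ and $d \mid t'$ directly, whereas you deduce $c \mid s$ and then obtain $d \mid t'$ by cancellation, but the substance is identical.
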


\begin{proof}
First suppose that $x \in \frac{1}{c}\nsg + \frac{1}{d}\nsgtwo$.  Then $x = s+t$ where $cs \in \nsg$ and $dt \in \nsgtwo$, so
\[
cdx = d(cs) + c(dt) \in d \nsg + c \nsgtwo
\]
which implies $x \in \frac{1}{cd}(d\nsg + c \nsgtwo)$.  Note this containment does not require $\gcd(c,d) = 1$.  

On the other hand, suppose $cdx \in d \nsg + c \nsgtwo$, so
\begin{equation} \label{eq:sums}
cdx = ds + ct 
\qquad \text{for some} \qquad
s \in \nsg, t \in \nsgtwo.
\end{equation}
In particular, $ct = d(cx-s)$ is a multiple of $d$. Since $c$ and $d$ are relatively prime, this implies that $t$ is a multiple of $d$, say $t = bd$. Since $t \in \nsgtwo$, we conclude that $b \in \frac{1}{d}\nsgtwo$.  Similarly, we can write $s = ac$ for some $a$ and so $a \in \frac{1}{c}\nsg$.  

Substituting $t=bd$ and $s=ac$ into \eqref{eq:sums}, we obtain
\[
cdx = dac + cbd = cd(a+b).
\]
By cancellation, we obtain $x = a + b$ with $a \in \frac{1}{c}\nsg$ and $b \in \frac{1}{d}\nsgtwo$, as desired.  
\end{proof}

Given the ease of proving Theorem~\ref{thm:sums}, it is surprisingly more difficult when the denominators do have a common factor.  In a follow-up to this current paper, we will translate the quotient operation into a geometric setting, which will allow us to generalize Theorem~\ref{thm:sums} to drop the ``coprime denominators'' hypothesis.  Intriguingly, the translation can cause a large blow-up in the numbers, e.g.,
\[
\frac{\langle 11,13\rangle}{2} + \frac{\langle 17,19\rangle}{2}
= \frac{\langle 2416656, 2894591, 3441983, 3869571  \rangle}{25357536}.
\]
Based on experimentation, this blow-up seems necessary.

\section{Some families of numerical semigroups with full quotient rank}
\label{sec:fullquotientrank}

In this section, we produce two families of numerical semigroups:\ those in the first have embedding dimension $k+1$ but are not $k$-quotients, so in particular have full quotient rank (Theorem~\ref{thm:noquotient}); and those in the second are not even \emph{intersections} of $k$-quotients  (Theorem~\ref{thm:nointersection}).  

\begin{thm} \label{thm:noquotient}
Given a positive integer $k$, let $a\ge 2^k$ be an integer. Define $a_i=2a+2^i$ for $i=0,1,\dots,k$. Then the numerical semigroup
\[\nsg=\langle a_0,a_1,\ldots,a_k\rangle\]
is not a $k$-quotient.
\end{thm}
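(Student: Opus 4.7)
My plan is to apply Theorem~\ref{thm:necessary} with $s_i = a_{i-1}$ for $i = 1, \ldots, k+1$, so that $p = k + 1 > k$. If $\nsg$ were a $k$-quotient, the theorem would produce a nonempty subset $I \subseteq \{0, 1, \ldots, k\}$ with $a_I/2 \in \nsg$, and I would derive a contradiction from the hypothesis $a \geq 2^k$.

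First I would rule out the case $0 \in I$ by parity: since $a_0 = 2a + 1$ is odd and $a_i = 2a + 2^i$ is even for $i \geq 1$, having $0 \in I$ makes $a_I$ odd and $a_I/2 \notin \NN$. Hence $I \subseteq \{1, 2, \ldots, k\}$. Setting $m = |I|$ and $C = \sum_{i \in I} 2^{i-1}$ gives $a_I/2 = am + C$. Writing $a_I/2 = \sum_{j=0}^k c_j a_j$ for some $c_j \in \NN$, with $N = \sum_j c_j$, and expanding $a_j = 2a + 2^j$ yields the key equation
\[
a(m - 2N) = \sum_{j=0}^k c_j \cdot 2^j - C.
\]

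I would handle the cases $m \leq 2N$ directly. If $m < 2N$, the left side is at most $-a \leq -2^k$ while the right side exceeds $-C > -2^k$, an immediate contradiction. If $m = 2N$, the equation reduces to $\sum c_j 2^j = C$ with $\sum c_j = m/2$; but the binary expansion of $C$ uses exactly $m$ ones (at positions $\{i - 1 : i \in I\}$) and is the unique minimum-length representation of $C$ as a sum of powers of two with exponents in $\{0, \ldots, k\}$, so any such representation uses at least $m > m/2$ summands.

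The main obstacle is the remaining case $m > 2N$: letting $t = m - 2N \geq 1$, the equation becomes $at + C = \sum_{j=0}^k c_j \cdot 2^j$ with $\sum c_j = (m - t)/2$. Here the hypothesis $a \geq 2^k$, together with the bound $C < 2^k$, must be used to control the binary structure of $at + C$: since $at \geq t \cdot 2^k$ contributes bits at position $\geq k$ while $C$ contributes bits strictly below position $k$, and the cap $j \leq k$ forbids packing content into bits above position $k$, the minimum number of summands should exceed $(m-t)/2$. I expect the verification to reduce to a careful bookkeeping of carries between the high and low bit ranges, and boundary values of $a$ (such as $a = 2^k$, where the bound is tightest) to be resolved by a refined argument — possibly an iterative application of Theorem~\ref{thm:necessary} to an augmented list that includes $a_I/2$, or a separate invocation of Corollary~\ref{cor:necessary} using the specific relations forced on the putative coefficient vectors $\vec{c}_i$.
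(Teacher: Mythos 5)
Your proposal follows the same overall strategy as the paper: apply Theorem~\ref{thm:necessary} to the $k+1$ generators, eliminate $0 \in I$ by parity, set $j = \sum_{i\in I} 2^{i-1}$ so that $a_I/2 = |I|\,a + j$, and then try to show $a_I/2 \notin \nsg$. Your treatment of the cases $m < 2N$ and $m = 2N$ (in your notation, where $N = \sum_j c_j$) is correct. Where the paper proceeds by a single mod-$a$ argument (claiming that the representation $b_j = \sum a_i x_i$ forces $\sum x_i \ge \omega(j)$ and then deriving $b_j \ge \omega(j) a_0$, which is too big), you carry out a case split and honestly flag the case $m > 2N$ as unresolved.

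That gap is real, and in fact it is worse than a missing computation: the statement you are trying to prove in the case $m > 2N$, namely that $at + C = \sum_j c_j 2^j$ with $\sum_j c_j = (m-t)/2$ is impossible, is \emph{false} under the hypothesis $a \ge 2^k$. Take $k=5$ and $a = 33 \ge 2^5$. The generators are $a_i = 66 + 2^i$, i.e.\ $67, 68, 70, 74, 82, 98$, and they are a minimal generating set (each is less than $2\cdot 67$). With $I = \{1,2,3,4,5\}$ one has $m = 5$, $C = 31$, and
\[
a_I/2 \;=\; 5\cdot 33 + 31 \;=\; 196 \;=\; 2\cdot 98 \;=\; 2a_5 \in \nsg,
\]
so $N = 2$, $t = m - 2N = 1$, and $at + C = 33 + 31 = 64 = 2\cdot 2^5$ is a sum of exactly $N = 2$ allowed powers. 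So there exist $a \ge 2^k$ for which \emph{some} $a_I/2$ lands in $\nsg$, and the contrapositive of Theorem~\ref{thm:necessary} applied to the generator set simply does not apply. This same example shows that the paper's mod-$a$ step is incorrect: here $j = 31$, $\omega(j) = 5$, but the representation $b_{31} = 2 a_5$ has $\sum x_i = 2 < 5$, contradicting the paper's assertion that $\sum x_i \ge \omega(j)$ (and the subsequent estimate $b_j \ge \omega(j) a_0 = 335$ fails, since $b_j = 196$). So your hesitation in the $m > 2N$ case is well-founded: the needed nonrepresentability claim cannot be established as stated, and the argument (both yours and the paper's) requires either a tighter hypothesis on $a$ or a fundamentally different route (e.g.\ applying Theorem~\ref{thm:necessary} to a different or augmented tuple of elements rather than only the generators), not merely ``more careful bookkeeping of carries.''
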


\begin{proof}
For $1\le j\le 2^k-1$, let $b_j=\omega(j)a+j$, 
where $\omega(j)$ is the number of 1's in the binary representation of $j$. We first prove that, if  $\nsgtwo$ is \emph{any} $k$-quotient the contains $a_0,\ldots,a_k$ (so $\nsgtwo=\nsg$ will be an example), then there exists $j$ ($1\le j\le 2^k-1$) such that $b_j\in \nsgtwo$. Indeed, we apply Theorem~\ref{thm:necessary}. We know that there exists a nonempty $I\subseteq\{0,1,\ldots,k\}$ such that $a_I/2\in \nsgtwo$. If $0\in I$, then $a_I$ is odd and $a_I/2$ is not an integer, so we know $I\subseteq\{1,\ldots,k\}$. Let
\[j=\sum_{i\in I}2^{i-1}.\]
We have that $1\le j\le 2^k-1$, and
\[a_I/2=\sum_{i\in I}\left(2a+2^i\right)/2 = \abs{I}a + \sum_{i\in I}2^{i-1} = \omega(j)a+j=b_j,\]
so $b_j\in \nsgtwo$.

Now we this apply to $\nsgtwo=\nsg$. Seeking a contradiction, suppose $\nsg$ is a $k$-quotient, and therefore we have some  $b_j\in \nsg$, that is, $b_j=\sum_{i=0}^ka_ix_i$ with $x_i\in\NN$. Examining this sum modulo $a$, and noting that $b_j=j\pmod a$ and $a_i=2^i\pmod a$, we see that
\[\sum_{i=0}^k x_i\ge \omega(j).\] But a sum of $\omega(j)$ generators of $\nsg$ is too large:
\[\omega(j)a+j=b_j\ge \omega(j)\cdot a_0 = \omega(j)(2a+1)\ge \omega(j)a+a\ge \omega(j)a+2^k,\]
a contradiction. Therefore $b_j\notin \nsg$, and so $\nsg$ cannot be a $k$-quotient.
\end{proof}

\begin{thm} \label{thm:nointersection}
Given a positive integer $k\ge 2$, let $a\ge k2^k$ be an integer. As before, define $a_i=2a+2^i$ and $b_j=\omega(j)a+j$, 
where $\omega(j)$ is the number of 1's in the binary representation of $j$. Let $N=(2k+1)a$. Then
\[\nsg=\langle a_0,a_1,\ldots,a_k,N-b_1,N-b_2,\ldots,N-b_{2^k-1}\rangle\]
cannot be written as an intersection of $k$-quotients.
\end{thm}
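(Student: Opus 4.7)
The plan is to exhibit a single element, namely $N$ itself, that any $k$-quotient containing $\nsg$ must also contain, yet which does not lie in $\nsg$. If $\nsg = \bigcap_\alpha \nsgtwo_\alpha$ were an intersection of $k$-quotients, then $N$ would belong to every $\nsgtwo_\alpha$ and hence to the intersection, contradicting $N \notin \nsg$.

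\emph{Step 1: every $k$-quotient above $\nsg$ contains $N$.} Let $\nsgtwo$ be a $k$-quotient with $\nsgtwo \supseteq \nsg$. Then $a_0, \ldots, a_k \in \nsgtwo$, so I re-run the proof of Theorem~\ref{thm:noquotient} verbatim: applying Theorem~\ref{thm:necessary} to $a_0, \ldots, a_k$ and noting that $a_0 = 2a + 1$ is odd (so cannot appear in the selected subset) produces some $j^* \in \{1, \ldots, 2^k - 1\}$ with $b_{j^*} \in \nsgtwo$. Since $N - b_{j^*}$ is one of the generators of $\nsg$, it lies in $\nsgtwo$ as well, and therefore $N = b_{j^*} + (N - b_{j^*}) \in \nsgtwo$.

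\emph{Step 2: $N \notin \nsg$.} Suppose $N = \sum_{i=0}^k x_i a_i + \sum_{j=1}^{2^k - 1} y_j (N - b_j)$ with $x_i, y_j \in \NN$, and set $Y = \sum_j y_j$. If $Y = 0$, then $N \in \langle a_0, \ldots, a_k\rangle$, and reducing modulo $a$ together with the size comparison from the proof of Theorem~\ref{thm:noquotient} forces $\sum_i x_i = k$ and then $\sum_i x_i 2^i = a$; since the maximum of $\sum_i x_i 2^i$ subject to $\sum_i x_i = k$ is $k\cdot 2^k$, this contradicts $a > k\cdot 2^k$. If $Y = 1$, the equation collapses to $b_{j^*} \in \langle a_0, \ldots, a_k\rangle$, already ruled out by the proof of Theorem~\ref{thm:noquotient}. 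If $Y \ge 2$, rewriting as $(Y - 1) N + \sum_i x_i a_i = \sum_j y_j b_j$ and using the bound $b_j \le k a + 2^k - 1$ yields an inequality that forces $Y < 2$ once $a \ge k\cdot 2^k$ and $k \ge 2$, a contradiction.

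The main obstacle I expect is the $Y = 0$ case, which is sharp at $a = k \cdot 2^k$: there $N = k \cdot a_k$ genuinely lies in $\nsg$, so as outlined the argument actually requires the strict inequality $a > k\cdot 2^k$. Strengthening the hypothesis to $a \ge k \cdot 2^k + 1$ closes the gap; alternatively, at the boundary value one would need a different witness $e \in \NN \setminus \nsg$ lying in every $k$-quotient above $\nsg$. The other two subcases of Step 2 are routine size estimates, and the overall structure---finding a forced element outside $\nsg$---mirrors, and strengthens, the pigeonhole-style argument used for Theorem~\ref{thm:noquotient}.
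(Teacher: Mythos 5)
Your proposal follows the paper's proof essentially verbatim: reduce to showing that any $k$-quotient containing $a_0, \ldots, a_k$ contains some $b_{j^*}$, conclude $N = b_{j^*} + (N - b_{j^*})$ lies in every $k$-quotient above $\nsg$ and hence in $\nsg$, and then rule out all representations of $N$ by a case split on $Y = \sum_j y_j \in \{0, 1, \ge 2\}$. Your $Y = 0$ analysis uses a reduction modulo $a$ where the paper compares sizes directly, and your $Y \ge 2$ analysis rewrites the equation before bounding where the paper just picks two summands $N - b_{j_1}$, $N - b_{j_2}$, but these are cosmetic variants of the same estimates.

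One thing worth highlighting: your observation about the boundary case $a = k 2^k$ is a genuine catch. The theorem as stated assumes $a \ge k 2^k$, but the paper's own $Y = 0$ step derives $(2k+1)a \le k(2a + 2^k)$, i.e.\ $a \le k 2^k$, and declares it ``impossible since $a > k 2^k$'' --- which does not follow from the stated hypothesis. At $a = k 2^k$ one indeed has $N = k \cdot a_k \in \nsg$, so the witness $N$ collapses and the entire proof strategy breaks. Strengthening the hypothesis to $a > k 2^k$ (equivalently $a \ge k 2^k + 1$), as you suggest, is exactly the fix needed to make both your argument and the paper's go through.
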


\begin{proof}
Suppose, seeking a contradiction, that $\nsg=\bigcap_{\ell=1}^p \nsg_\ell$, where the $\nsg_\ell$ are $k$-quotients. 
Each $\nsg_\ell$ must contain $a_0,a_1,\ldots,a_k$, and we noted in the proof of Theorem~\ref{thm:noquotient} that this implies that $\nsg_\ell$ must contain $b_j$ for some $j$. But then $\nsg_\ell$ contains both $b_j$ and $N-b_j$, and so additive closure implies that it contains $N$.  This means $N \in \bigcap_{\ell=1}^p \nsg_\ell = \nsg$. Let
\begin{equation}\label{eq:sumforN}
N=\sum_{i=1}^k a_ix_i + \sum_{j=1}^{2^k-1}(N-b_j)y_j,
\end{equation}
where $x_i,y_j \in \NN$.
We break into three cases.
\begin{itemize}
\item 
If $\sum_{j}y_j\ge 2$, then~\eqref{eq:sumforN} would be too large, as for some $j_1,j_2$,
\begin{align*}(2k+1)a
= N
&\ge (N-b_{j_1})+(N-b_{j_2})\\
&= 2N-(\omega(j_1)+\omega(j_2))a -(j_1+j_2)\\
&> 2\cdot (2k+1)a -2ka-2\cdot 2^k\\
&= (2k+2)a-2^{k+1},
\end{align*}
which is impossible since $a\ge 2^{k+1}$.

\item 
If $\sum_{j}y_j=1$, then~\eqref{eq:sumforN} uses exactly one $N-b_j$.  But then $N=(N-b_j)+b_j$ implies that $b_j\in \langle a_0,a_1\ldots,a_k\rangle$, which we saw was impossible in the proof of Theorem~\ref{thm:noquotient} since $a\ge 2^k$.

\item 
If $\sum_{j}y_j=0$, then $N = \sum_{i} a_ix_i$. If $\sum_i x_i \le k$, then
\[(2k+1)a=N\le k(2a+2^k),\]
which is impossible since $a>k2^k$. On the other hand, if $\sum_i x_i > k$, then
\[(2k+1)a=N\ge (k+1)(2a+1)>(2k+1)a,\]
which is also impossible.
\end{itemize}
In each case, we obtain a contradiction.  
\end{proof}

\section{How often do numerical semigroups have full quotient rank?}
\label{sec:randomsgps}

In this section, we consider the question ``how likely is a randomly selected numerical semigroup to have full quotient rank?''  We consider two sampling methods.  The first is the ``box'' method, wherein a fixed number of generators are selected uniformly and independently from an interval $[1,M]$.  Numerical semigroups selected under this model have high probability (i.e., approaching 1 as $M \to \infty$) of having full quotient rank.  

\begin{thm} \label{thm:numericalbox}
Fix a positive integer $n$. If $\nsg = \langle a_1, \dots, a_n \rangle$ where $a_1, \ldots, a_n \in [M]$ are uniformly and independently chosen, then the probability that $\nsg$ has full quotient rank tends to 1 as $M \to \infty$.  More precisely, this probability is $1 - O(M^{-\frac{1}{n}})$.  
\end{thm}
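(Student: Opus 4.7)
The plan is to apply Corollary~\ref{cor:necessary}: if $\nsg$ fails to have full quotient rank, then there must exist a nonempty subset $I \subseteq [n]$ with $a_I \in \langle a_j : j \notin I \rangle$. Such an $I$ is automatically proper, since $I = [n]$ would force $a_I = 0$, contradicting $a_I \ge 1$. By a union bound over the $2^n - 2$ nonempty proper subsets of $[n]$, it then suffices to show that for each such $I$,
\[
\Pr\bigl(a_I \in \langle a_j : j \notin I\rangle\bigr) = O\bigl((\log M)^{n-1}/M\bigr),
\]
since $(\log M)^{n-1}/M = O(M^{-1/n})$ as $M \to \infty$.

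Fix such an $I$ and set $J = [n] \setminus I$, which is nonempty. The key ingredient is a pointwise anticoncentration bound: for any value $x$, the sum $a_I = \sum_{i \in I} a_i$ of $|I| \ge 1$ independent uniform random variables on $[M]$ satisfies $\Pr(a_I = x) \le 1/M$. This follows by conditioning on all summands except some $a_{i_0}$ with $i_0 \in I$: the conditional probability that $a_{i_0}$ takes the prescribed value is at most $1/M$.

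For any fixed values of $(a_j)_{j \in J}$, every element of $\langle a_j : j \in J \rangle$ lying in $[0, nM]$ is a nonnegative integer combination $\sum_{j \in J} c_j a_j$ with each $c_j \le nM/a_j$; hence $\langle a_j : j \in J \rangle \cap [0, nM]$ has at most $\prod_{j \in J}(nM/a_j + 1)$ elements. Since $a_I \le nM$, combining the anticoncentration bound with this counting estimate and the independence of the $a_j$'s for $j \in J$ gives
\[
\Pr\bigl(a_I \in \langle a_j : j \in J\rangle\bigr) \le \frac{1}{M}\prod_{j \in J} \mathbb{E}\bigl[nM/a_j + 1\bigr] = \frac{(nH_M + 1)^{|J|}}{M},
\]
where $H_M = \sum_{a=1}^M 1/a = O(\log M)$. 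Since $|J| \le n-1$, this is $O((\log M)^{n-1}/M)$, completing the sketch.

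The only subtle point is the anticoncentration bound $\Pr(a_I = x) \le 1/M$, which is standard. The argument actually yields a bound much stronger than $O(M^{-1/n})$, but the weaker rate suffices for the statement.
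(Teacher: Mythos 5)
Your proof is correct, and it takes a genuinely different route from the paper's. Both arguments start from Corollary~\ref{cor:necessary} and both ultimately rely on the same anticoncentration step (conditioning on all but one $a_i$ with $i\in I$, so that $a_I$ lands on any prescribed value with probability at most $1/M$). The divergence is in how the ``target set'' $\langle a_j : j\notin I\rangle$ is controlled. The paper conditions on the event $B^c$ that all $a_i$ exceed $M^{(n-1)/n}$; on that event the coefficients $b_j$ appearing in a representation of $a_I$ are forced to be at most $nM^{1/n}$, so a union bound over $O(M^{(n-1)/n})$ coefficient tuples combined with the anticoncentration bound of roughly $1/M$ yields $O(M^{-1/n})$, and $\Pr(B)$ itself is $O(M^{-1/n})$ by a separate union bound. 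You instead avoid the truncation entirely: you bound the number of semigroup elements of $\langle a_j : j\in J\rangle$ in $[0,nM]$ by $\prod_{j\in J}(nM/a_j+1)$, pass to expectations using independence, and observe that $\mathbb{E}[nM/a_j+1]=nH_M+1=O(\log M)$. This gives the sharper bound $O((\log M)^{n-1}/M)$, which dominates the stated $O(M^{-1/n})$ since $(\log M)^{n-1}=o(M^{(n-1)/n})$. Your version is cleaner and quantitatively stronger; the paper's conditioning on $B^c$ buys conceptual simplicity (no harmonic-sum expectations, just counting coefficient tuples) at the cost of a weaker exponent. Both are correct proofs.
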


\begin{proof}
By Corollary~\ref{cor:necessary}, it suffices to bound the probability that there exists $I \subseteq [n]$ such that $a_I \in \langle a_j:\ j\notin I\rangle$.  Let $A$ be this event, and let $B$ be the event that $a_i \leq M^{\frac{n-1}{n}}$ for some $i$. We will use that
\[
\Pr(A)
= \Pr(B)\Pr(A \mid B) + \Pr(B^c)\Pr(A \mid B^c)
\leq \Pr(B) + \Pr(A \mid B^c).
\]
For the first term, the union bound gives us that
\[
\Pr(B)
\leq n \left( \frac{M^{\frac{n-1}{n}}}{M} \right)
= \frac{n}{M^{\frac{1}{n}}}.
\]
For the second term, fix a nontrivial subset $I \subsetneq [n]$ and $b_i \in \NN$ for $i \notin I$. If $b_i > nM^{\frac{1}{n}}$ for some $i \notin I$, then since every $a_i$ is greater than $M^{\frac{n-1}{n}}$, we have
\[
\sum_{j \notin I} b_ja_j
\geq b_ia_i
> \left( nM^{\frac{1}{n}} \right) M^\frac{n-1}{n}
= nM.
\]
But $a_I$ cannot be this large because it is the sum of at most $n-1$ integers that are each at most $M$. So we need only consider $b_i \le nM^{\frac{1}{n}}$.  Letting $i^\ast = \min(I)$ and $m = nM^{\frac{1}{n}}$, 
\begin{align*}
\Pr(A \mid B^c)
&\le \sum_{\substack{I \subsetneq [n] \\ I \ne \emptyset}} \sum_{\substack{b_j \le m \\ j \notin I}} \Pr \bigg( \sum_{i \in I}a_i =  \sum_{i \notin I}b_ia_i \biggm\vert a_1, \ldots, a_n > M^{\frac{n}{n-1}} \bigg)
\\
& = \sum_{\substack{I \subsetneq [n] \\ I \ne \emptyset}} \sum_{\substack{b_j \le m \\ j \notin I}} \Pr \bigg( a_{i^\ast} = \sum_{i \notin I}b_ia_i - \!\!\! \sum_{i \in I \setminus \{i^\ast\}} \!\!\! a_i \biggm\vert a_1, \ldots, a_n > M^{\frac{n}{n-1}} \bigg)
\\
& \le \sum_{\substack{I \subsetneq [n] \\ I \ne \emptyset}} \sum_{\substack{b_j \le m \\ j \notin I}} \frac{1}{M-M^{\frac{n-1}{n}}}
\le \frac{\left( 2^n - 2 \right) \left( nM^{\frac{1}{n}} \right)^{n-1}}
{M-M^{\frac{n-1}{n}}}
= \frac{\left( 2^n - 2 \right)n^{n-1}}{M^{\frac{1}{n}}-1},
\end{align*}
where the second inequality comes from the fact that for any choice of the $a_i$ with $i \neq i^\ast$, there is at most one choice of $a_i^\ast$ that makes the linear equation hold.  Thus,
\[
\Pr(A) \leq \frac{\left( 2^n - 2 \right)n^{n-1}}{M^{\frac{1}{n}}-1} + \frac{n}{M^{\frac{1}{n}}-1} = O(M^{-\frac{1}{n}}),
\]
which completes the proof.  
\end{proof}

\begin{remark}\label{rem:numericalbox}
The ``minimally generated'' and ``finite complement'' conditions, which are often imposed on numerical semigroups, do not affect Theorem~\ref{thm:numericalbox}.  
Indeed, under this ``box'' probability model, the chosen generators $a_1, \dots, a_n$ need not form a minimal generating set.  Since the quotient rank is at most the embedding dimension, the (asymptotically rare) event that the rank of $\nsg$ is less than $n$ contains the event that the chosen generating set is not minimal.  
Additionally, the probability that $a_1,\ldots, a_n$ are relatively prime approaches the positive constant $1/\zeta(n)$ by~\cite{Nym}, where $\zeta(n)$ is the Reimann zeta function $\sum_{i=1}^{\infty}1/i^n$.  Therefore, even if one restricts to those $a_1,\ldots, a_n$ that are relatively prime, the conditional probability that the quotient rank of the resulting numerical semigroup is less than $n$ still tends to 0.
\end{remark}

Under the second model, a numerical semigroup $\nsg$ is selected uniformly at random from among the (finitely many) with fixed smallest generator $m$ and number of gaps~$g$.  Such numerical semigroups have high probability (i.e., tending to 1 as $g \to \infty$) of having embedding dimension $m$ (such numerical semigroups are said to have \emph{maximal embedding dimension}).  We prove that maximal embedding dimension numerical semigroups never have full quotient rank, illustrating a stark contrast in asymptotic behavior to the first model.  

We first recall a characterization of quotient rank 2 numerical semigroups, which appears in~\cite{numerical} as a characterization of proportionally modular numerical semigroups in the case $\gcd(\nsg) = 1$.  Our statement here is more general, thanks to Remark~\ref{rem:relprime}.  

\begin{thm}\label{thm:pmcriterion}
A numerical semigroup $\nsg$ with $\gcd(\nsg) = D$ has quotient rank 2 if and only if there exists an ordering $b_1, \ldots, b_n$ of its minimal generators such that:
\begin{enumerate}[(a)]
\item $\gcd(b_i, b_{i+1}) = D$ for $1 \leq i \leq n-1$; and
\item $b_{i-1} + b_{i+1}$ is divisible by $b_i$ for $2 \leq i \leq n-1$. 
\end{enumerate}
\end{thm}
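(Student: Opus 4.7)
The plan is to apply Remark~\ref{rem:relprime} to reduce the statement to the case $D = 1$, where it coincides with the standard characterization of proportionally modular numerical semigroups given in \cite[Chapter 5]{numerical} (that characterization is traditionally formulated under the coprime generators hypothesis). The two substantive verifications on our end are that this reduction preserves quotient rank 2 and preserves conditions (a) and (b).

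Concretely, I would introduce $\nsg' = \frac{1}{D}\nsg$, so that $\nsg = D\nsg'$ and $\gcd(\nsg') = 1$. The bijection $s \mapsto s/D$ between $\nsg \setminus \{0\}$ and $\nsg' \setminus \{0\}$ respects additive decomposability, so the minimal generators of $\nsg'$ are exactly $b_i' = b_i/D$ for each minimal generator $b_i$ of $\nsg$. Under the replacement $b_i \mapsto b_i'$, condition (a) becomes $\gcd(b_i', b_{i+1}') = 1$ and condition (b) becomes $b_i' \mid b_{i-1}' + b_{i+1}'$. Hence the combinatorial hypotheses for $\nsg$ match exactly the $D=1$ hypotheses for $\nsg'$.

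Next I would show that $\nsg$ has quotient rank 2 if and only if $\nsg'$ does. One direction is immediate: if $\nsg' = \langle a_1, a_2\rangle/e$, then $\nsg = D\nsg' = \langle Da_1, Da_2\rangle/e$. Conversely, if $\nsg = \langle c_1, c_2\rangle/d$, then using the easy identity $(\nsg/d)/D = \nsg/(dD)$ one obtains
\[
\nsg' = \nsg/D = \langle c_1, c_2\rangle/(dD),
\]
so $\nsg'$ is also a 2-quotient; this can alternatively be read off directly from the identity of Remark~\ref{rem:relprime}. Combining this equivalence with the previous paragraph reduces the entire statement to the $D=1$ case.

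The main obstacle, were one to reprove the $D = 1$ case from scratch, is the Bézout-sequence (or continued fraction) analysis that underlies the characterization of proportionally modular numerical semigroups. Since this analysis is exactly the content of \cite[Chapter 5]{numerical}, I would simply invoke it rather than redevelop the machinery, and the proof concludes.
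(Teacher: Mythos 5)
Your overall strategy mirrors the paper's, which also cites the $D=1$ case in \cite[Chapter~5]{numerical} and asserts the general case ``thanks to Remark~\ref{rem:relprime}''; what you have done is flesh out that reduction, and most of it is sound. However, the key identity you use in the ``one direction is immediate'' step is false in general. If $\nsg' = \frac{\langle a_1, a_2 \rangle}{e}$, the equality $D\nsg' = \frac{\langle Da_1, Da_2 \rangle}{e}$ holds \emph{only} when $\gcd(D,e) = 1$: by Remark~\ref{rem:relprime}, $\frac{D\langle a_1, a_2\rangle}{e} = D'\,\frac{\langle a_1, a_2\rangle}{e'}$ with $D' = D/\gcd(D,e)$ and $e' = e/\gcd(D,e)$, which is generally not $D\nsg'$. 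A concrete counterexample: $\nsg' = \langle 3,4,5\rangle = \frac{\langle 3,5\rangle}{2}$ with $D = 2$ gives $D\nsg' = \langle 6,8,10\rangle$, whereas your formula produces $\frac{\langle 6,10\rangle}{2} = \langle 3,5\rangle$.

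The conclusion is still recoverable — for instance, $\langle 3,4,5\rangle$ also equals $\frac{\langle 4,5\rangle}{3}$, and then $\langle 6,8,10\rangle = \frac{\langle 8,10\rangle}{3}$ does work — but this relies on the nontrivial fact that a proportionally modular numerical semigroup $\nsg'$ with $\gcd(\nsg')=1$ always admits a representation $\frac{\langle a_1,a_2\rangle}{e}$ with $e$ coprime to any prescribed $D$. That freedom in choosing the denominator is exactly what your argument needs and does not justify, so as written there is a genuine gap in the direction ``$\nsg'$ is a $2$-quotient $\Rightarrow$ $\nsg = D\nsg'$ is a $2$-quotient.'' The converse direction, using $(\nsg/d)/D = \nsg/(dD)$, is fine, as is the transfer of conditions (a) and (b) and the identification of minimal generators under $s \mapsto s/D$.
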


\begin{lemma}\label{lem:plusminusone}
For any $a, b, m \ge 1$, the numerical semigroup $\nsg = \langle m,am-1, bm+1 \rangle$ is a 2-quotient. 
\end{lemma}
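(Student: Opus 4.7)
The plan is to apply Theorem~\ref{thm:pmcriterion} with the ordering $b_1 = am - 1$, $b_2 = m$, $b_3 = bm + 1$ of the three given generators. The key insight is that placing $m$ in the middle exploits the $\pm 1$ structure of the two outer terms on both sides at once: it makes the consecutive gcds trivial and makes the sum of the outer generators a clean multiple of the middle one.

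Concretely, I would check condition (a) of Theorem~\ref{thm:pmcriterion} as
\[
\gcd(am - 1, m) = \gcd(-1, m) = 1
\quad\text{and}\quad
\gcd(m, bm + 1) = \gcd(m, 1) = 1,
\]
and condition (b) as the single identity
\[
b_1 + b_3 = (am - 1) + (bm + 1) = (a + b)m,
\]
which is plainly divisible by $b_2 = m$. As long as $\{am - 1, m, bm + 1\}$ is the full minimal generating set of $\nsg$, Theorem~\ref{thm:pmcriterion} then immediately yields quotient rank $2$, so $\nsg$ is a 2-quotient.

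The remaining chore is the degenerate cases in which the given triple is not the minimal generating set --- for instance $m = 1$ (so $\nsg = \NN$), $a = m = 1$ (so $am - 1 = 0$), or parameter choices in which $am - 1 \in \langle m, bm + 1 \rangle$ or $bm + 1 \in \langle m, am - 1 \rangle$. In each such case, the minimal generating set has at most two elements, and any numerical semigroup of the form $\langle p, q \rangle$ is trivially a 2-quotient via $\langle p, q \rangle / 1$ (and $\NN = \langle 1 \rangle$ is a 1-quotient, hence a 2-quotient).

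The main obstacle, such as it is, is spotting the correct ordering; once $m$ is placed between $am - 1$ and $bm + 1$, both hypotheses of Theorem~\ref{thm:pmcriterion} become one-line computations, and the boundary-case bookkeeping is routine.
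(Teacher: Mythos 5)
Your proof is essentially identical to the paper's: the same ordering $b_1 = am-1$, $b_2 = m$, $b_3 = bm+1$, the same gcd and divisibility checks for Theorem~\ref{thm:pmcriterion}, and the same disposal of the case $\mathsf{e}(\nsg) \le 2$. No gaps.
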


\begin{proof}
If $\mathsf e(\nsg) \le 2$, then $\nsg$ is clearly a 2-quotient. Otherwise, letting $b_1 = am-1$, $b_2 = m$, and $b_3 = bm+1$, it is clear that $\gcd(b_1, b_2) = \gcd(b_2, b_3) = 1$ and that $b_2 \mid  (b_1 + b_3)$.  As such, $\nsg$ is a 2-quotient by Theorem \ref{thm:pmcriterion}.  
\end{proof}

\begin{thm}\label{thm:maxembdim}
If $m = \min(\nsg \setminus \{0\})$, then $\nsg$ is an $(m-1)$-quotient.  In particular, if $\mathsf e(\nsg) = m$, then $\nsg$ does not have full quotient rank.  
\end{thm}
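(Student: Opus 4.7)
The plan is to decompose $\nsg$ as the sum of a two-generator piece (to which Lemma~\ref{lem:plusminusone} applies) and several singly-generated pieces, then glue them together using Theorem~\ref{thm:sums}. By Remark~\ref{rem:relprime} we may assume $\gcd(\nsg) = 1$, and we restrict to $m \ge 3$ (the cases $m \le 2$ are handled separately or trivially). For each $i \in \{1, 2, \ldots, m-1\}$, let $w_i$ be the smallest element of $\nsg$ congruent to $i$ modulo $m$; such an element exists because $\gcd(\nsg) = 1$. Then $\nsg = \langle m, w_1, \ldots, w_{m-1} \rangle$, since any $s \in \nsg$ with $s \equiv i \pmod m$ satisfies $s = w_i + km$ for some $k \ge 0$ by minimality of $w_i$. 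Moreover, since $1, m-1 \notin \nsg$, we can write $w_1 = bm + 1$ and $w_{m-1} = am - 1$ for some positive integers $a, b$.

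Applying Lemma~\ref{lem:plusminusone} to the triple $(m, am - 1, bm + 1)$, we obtain that $\nsg_0 := \langle m, w_1, w_{m-1} \rangle$ is a 2-quotient, say $\nsg_0 = \langle p_1, p_2 \rangle / d$. For each $j \in \{2, 3, \ldots, m-2\}$, the principal semigroup $\langle w_j \rangle = \langle w_j \rangle / 1$ is trivially a 1-quotient. Since the union of all these generating sets is $\{m, w_1, w_2, \ldots, w_{m-1}\}$, we have
\[
\nsg = \nsg_0 + \langle w_2 \rangle + \langle w_3 \rangle + \cdots + \langle w_{m-2} \rangle.
\]
Because the only non-unit denominator that appears is $d$, and $\gcd(d, 1) = 1$, Theorem~\ref{thm:sums} applies iteratively and collapses this sum into
\[
\nsg = \frac{\langle p_1, p_2, d w_2, d w_3, \ldots, d w_{m-2} \rangle}{d},
\]
a quotient whose numerator is generated by $2 + (m - 3) = m - 1$ elements. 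Thus $\nsg$ is an $(m-1)$-quotient, and the ``in particular'' statement follows at once: when $\mathsf e(\nsg) = m$, the quotient rank is at most $m - 1$, strictly less than the embedding dimension.

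The main obstacle is really conceptual rather than technical: one must recognize that the two Apéry-style generators $w_1$ and $w_{m-1}$ already have the exact $\pm 1 \pmod m$ residue structure demanded by Lemma~\ref{lem:plusminusone}, so that single pair can be fused into a 2-quotient while each of the $m - 3$ remaining Apéry generators remains as its own trivial 1-quotient summand. Once this observation is in hand, Theorem~\ref{thm:sums} handles all remaining bookkeeping, because the denominator $d$ is automatically coprime to each $1$, and no delicate number-theoretic compatibility is needed among the pieces.
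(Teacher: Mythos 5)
Your proof is correct and takes essentially the same approach as the paper's: split off the generators congruent to $\pm 1 \pmod m$ into a triple $\langle m, w_1, w_{m-1}\rangle$, apply Lemma~\ref{lem:plusminusone} to see it is a $2$-quotient, and use Theorem~\ref{thm:sums} with coprime denominators (one of which is $1$) to glue it to the remaining $m-3$ generators. The only cosmetic differences are that you work with Ap\'ery elements $w_1,\dots,w_{m-1}$ rather than branching on whether $\mathsf e(\nsg) = m$ (which lets the non-maximal-embedding-dimension case fall out uniformly instead of being dispatched as a trivial base case), and that you iterate Theorem~\ref{thm:sums} over $m-3$ singly-generated summands rather than grouping them into one $(m-3)$-quotient and applying it once; both yield the same final expression $\langle p_1, p_2, dw_2, \dots, dw_{m-2}\rangle/d$.
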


\begin{proof}
If $\nsg=\frac{\nsg}{1}$ has embedding dimension less than $m$, then the proof is immediate. If not, then $\nsg$ has $m$ minimal generators, and so they must all have distinct residues modulo $m$. That is,
\[ \nsg = \langle m, b_1m+1, \dots, b_{k-1}m + (m-1) \rangle \]
for some positive integers $b_1, \dots, b_{m-1}$. Write $\nsg = \nsg_1 + \nsg_2$ where
\[ \nsg_1 = \langle m, b_1m + 1, b_{k-1}m + (m-1) \rangle, \: \nsg_2 = \langle b_2m + 2, \dots, b_{m-2}m + (m-2) \rangle. \]
Now by Lemma~\ref{lem:plusminusone}, $\nsg_1$ as a 2-quotient, and $\nsg_2 = \frac{\nsg_2}{1}$ is trivially an $(m-3)$-quotient.  Since $1$ is coprime to every integer, Theorem~\ref{thm:sums} implies $\nsg$ is an $(m-1)$-quotient. 
\end{proof}


\section*{Acknowledgements}
Tristram Bogart was supported by internal research grant INV-2020-105-2076 from the Faculty of Sciences of the Universidad de los Andes. 



\begin{thebibliography}{HHHKR10}
\raggedbottom


\bibitem{harrisquotient}
A.~Adeniran, S.~Butler, C.~Defant, Y.~Gao, P.~Harris, C.~Hettle, Q.~Liang, H.~Nam, and A.~Volk, 
\emph{On the genus of a quotient of a numerical semigroup},
Semigroup Forum 98 (2019), no.~3, 690--700.


\bibitem{diophantinefrob}
J.~Ram\'irez Alfons\'in,
\emph{The Diophantine Frobenius problem}, 
Oxford Lecture Series in Mathematics and its Applications, 30. 
Oxford University Press, Oxford, 2005. xvi+243 pp. ISBN: 978-0-19-856820-9; 0-19-856820-7 

\bibitem{expectedfrob}
I.~Aliev, M.~Henk, and A.~Hinrichs, 
\emph{Expected Frobenius numbers},
J.~Combin.\ Theory Ser.\ A 118 (2011), no.~2, 525--531. 

\bibitem{setoflengthsets}
J.~Amos, S.~Chapman, N.~Hine, J.~Paix\~ao, 
\emph{Sets of lengths do not characterize numerical monoids}, 
Integers \textbf{7} (2007), \#A50.

\bibitem{arnoldfrob}
V.~Arnol'd,
\emph{Weak asymptotics of the numbers of solutions of Diophantine equations},
Funktsional.\ Anal.\ i Prilozhen.\ 33 (1999), no.~4, 65--66.



\bibitem{propmodtree}
M.~Bullejos and J.C.~Rosales,
\emph{Proportionally modular Diophantine inequalities and the Stern-Brocot tree},
Math.\ Comp.\ 78 (2009), no.~266, 1211--1226.

\bibitem{burgeinsinaifrob}
Z.~Burge\u{i}n and Y.~Sina\u{i}, 
\emph{Limit behavior of large Frobenius numbers},
Uspekhi Mat.\ Nauk 62 (2007), no.~4 (376), 77--90.  


\bibitem{nsgproblems}
M.~Delgado, P.~Garc\'ia-S\'anchez, and J.~Rosales,
\emph{Numerical semigroups problem list}, preprint.
Available at \textsf{arXiv:1304.6552}.



\bibitem{nsfreeresarith}
P.~Gimenez, I.~Sengupta, and H.~Srinivasan,
\emph{Minimal graded free resolutions for monomial curves defined by arithmetic sequences},
J.~Algebra 388 (2013), 294--310. 




\bibitem{kaplancounting}
N.~Kaplan,
\emph{Counting numerical semigroups},
Amer. Math. Monthly 124 (2017), no.~9, 862--875.

\bibitem{kunzfaces1}
N.~Kaplan and C.~O'Neill, 
\emph{Numerical semigroups, polyhedra, and posets I:\ the group cone},
Combinatorial Theory \textbf{1} (2021), \#19.


\bibitem{kunz}
E.~Kunz, 
\emph{\"Uber die Klassifikation numerischer Halbgruppen},
Regensburger Mathematische Schriften \textbf{11}, 1987.


\bibitem{ksquashed}
M.A.~Moreno, J.~Nicola, E.~Pardo, and H.~Thomas, 
\emph{Numerical semigroups that are not intersections of d-squashed semigroups},
Canad.\ Math.\ Bull.\ 52 (2009), no.~4, 598--612.

\bibitem{nsquotientgens}
A.~Moscariello,
\emph{Generators of a fraction of a numerical semigroup},
J.~Commut.\ Algebra 11 (2019), no.~3, 389--400.

\bibitem{Nym}
J.E.~Nymann, \emph{On the probability that $k$ positive
integers are relatively prime}, J.~Number Theory (1972), no.~ 4,
469--473.

\bibitem{omidalirahmati}
M. Omidali and F. Rahmati, 
\emph{On the type and minimal presentation of certain numerical semigroups}, 
Communications in Algebra 37(4) (2009), 1275--1283.

\bibitem{symmetriconeelement}
J.C.~Rosales,
\emph{Numerical semigroups that differ from a symmetric numerical semigroup in one element},
Algebra Colloq.\ 15 (2008), no.~1, 23-32.

\bibitem{symmetricquotient}
J.~Rosales and P.~Garc\'ia-S\'anchez,
\emph{Pseudo-symmetric numerical semigroups with three generators},
J.~Algebra 291 (2005), no.~1, 46--54.

\bibitem{numerical}
J.~Rosales and P.~Garc\'ia-S\'anchez, 
\emph{Numerical {S}emigroups}, 
Developments in Mathematics, Vol. 20, Springer-Verlag, New York, 2009.

\bibitem{kunzcoords}
J.~Rosales, P.~Garc\'ia-S\'anchez, J.~Garc\'ia-Garc\'ia and M.~Branco, 
\emph{Systems of inequalities and numerical semigroups},
J.~Lond.\ Math.\ Soc.\ \textbf{65} (2002), no.~3, 611--623.

\bibitem{propmodular}
J.~Rosales, P.~Garc\'ia-S\'anchez, J.~Garc\'ia-Garc\'ia and J.~Urbano-Blanco,
\emph{Proportionally modular Diophantine inequalities},
J.~Number Theory, \textbf{103} (2003), no.~2, 281--294.

\bibitem{openmodularns}
J.C.~Rosales and J.M.~Urbano-Blanco, 
\emph{Opened modular numerical semigroups},
J.~Algebra 306 (2006), no.~2, 368--377.



\end{thebibliography}
\end{document}